\newcommand{\subtitle}[1]{%
  \posttitle{%
    \par\end{center}
    \begin{center}\large#1\end{center}
    \vskip0.5em}%
}
\theoremstyle{plain}
\newtheorem{theorem}{Theorem}[section]
\newtheorem{corollary}[theorem]{Corollary}
\theoremstyle{definition}
\newtheorem{remark}[theorem]{Remark}
\newtheorem{example}[theorem]{Example}
\newtheorem{assumption}[theorem]{Assumption}
\newcommand{\dint}{\mathup{d}}
\renewcommand{\rho}{\varrho}
\DeclareMathAlphabet{\mathup}{OT1}{\familydefault}{m}{n}
\newcommand{\wt}{\widetilde}
\newcommand{\widebar}[1]{\mbox{\kern1.5pt\hbox{\vbox{\hrule height 0.6pt \kern0.35ex
        \hbox{\kern-0.15em \ensuremath{#1 }\kern0.0em}}}}\kern-0.1pt}
\newlength{\fixboxwidth}
\definecolor{owngreen}{rgb}{0, 0.7, 0.2}
\begin{document}

\title{Stability 
%	properties 
	of doubly-intractable distributions}

\author{
	Michael Habeck\thanks{Friedrich-Schiller-Universit\"at Jena, Max-Planck-Institut f\"ur biophysikalische Chemie,
		Arbeitsgruppe Mikroskopische Bildanalyse, Kollegiengasse 10, 07743 Jena, Germany,
		Email: michael.habeck@uni-jena.de}, 
	Daniel Rudolf\thanks{Institute for Mathematical Stochastics, 
		University of Goettingen, Goldschmidtstra\ss e 7, 37077 G\"ottingen, Germany, 
		Email: daniel.rudolf@uni-goettingen.de},
	Bj\"orn Sprungk\thanks{Faculty of Mathematics and Computer Science, Technische
		Universit\"at Bergakademie Freiberg, 09596 Freiberg, Germany,
		Email: bjoern.sprungk@math.tu-freiberg.de}
}

\date{\today}

\maketitle
\begin{abstract}
Doubly-intractable distributions appear naturally as posterior distributions in Bayesian inference frameworks whenever the likelihood contains a normalizing function $Z$. Having two such functions $Z$ and $\wt Z$ we provide estimates of the total variation and $1$-Wasserstein distance of the resulting posterior probability measures. As a consequence this leads to local Lipschitz continuity w.r.t.~$Z$. In the more general framework of a random function $\wt Z$ we derive bounds on the expected total variation and expected $1$-Wasserstein distance. The applicability of the estimates is illustrated within the setting of two representative Monte Carlo recovery scenarios.
\end{abstract}

{\bf Keywords: doubly-intractable, Wasserstein distance, stability}

{\bf Classification. Primary: 60B10; Secondary: 62C10, 65C05.}

\section{Introduction}	
% Problem description we are dealing with
Suppose that $(\Theta,d)$ is a complete and separable metric space equipped with its Borel $\sigma$-algebra $\mathcal{B}(\Theta)$ and a $\sigma$-finite reference measure $\mu$.
Assume that there are measurable functions $\Phi \colon \Theta \to (-\infty,\infty)$ and $Z\colon \Theta \to (0,\infty)$ with
\[
C_Z := \int_\Theta \frac{\exp(-\Phi(\theta))}{Z(\theta)} \mu({\rm d}\theta) \in(0,\infty), 
\]
such that 
\begin{equation}
\label{eq: pi_Z}
\pi_{Z}(A) := \frac{1}{C_Z} \int_A \frac{\exp(-\Phi(\theta))}{Z(\theta)} \mu({\rm d}\theta),
\qquad A\in\mathcal{B}(\Theta),
\end{equation}
defines a probability measure on $(\Theta,\mathcal{B}(\Theta))$. 
We are interested in stability properties of $\pi_Z$ w.r.t.~the function $Z$. Given another measurable function $\widetilde{Z} \colon \Theta \to (0,\infty)$ which is somehow close to $Z$, we ask whether $\pi_Z$ and $\pi_{\widetilde{Z}}$ are also close to each other.  

% Motivation
This question is motivated by applications within Bayesian inference, where such type of distributions appear, see for example \cite{habeck2014bayesian,hunter2006inference} as well as \cite{park2018bayesian} and the references therein.
The interpretation is as follows: Think of $\theta \mapsto \exp(-\Phi(\theta))/Z(\theta)$ as a likelihood function which contains an unknown normalizing function $Z$. One is interested on sampling w.r.t.~a posterior distribution based on the partially unknown likelihood function. 
Unknown here in the sense that it is infeasible to evaluate $Z$ exactly. This and the fact that \eqref{eq: pi_Z} itself contains an unknown normalizing constant $C_Z$ 
is the reason for calling $\pi_Z$ doubly-intractable. A recent survey for approximate sampling of such doubly-intractable distributions is given in \cite{park2018bayesian}.
We provide a motivating example for such scenarios.
\begin{example}[Gibbs distribution as likelihood]
	\label{ex: Boltzmann}
	A Gibbs distribution on a finite state space $G$ is determined by a probability mass function $\rho(\cdot\mid \beta)$ with inverse temperature parameter $\beta>0$ given by
	\[
	\rho(x\mid \beta) = \exp(-\beta H(x))/Z(\beta), \quad x\in G,
	\]
	where $H\colon G \to [0,\infty)$ is called Hamiltonian  and $Z(\beta) = \sum_{x\in G} \exp(-\beta H(x))$ partition function. 
	Suppose that there is observational data $x_{\rm obs}\in G$ available as a realization of the Gibbs distribution but with unknown $\beta$.
	We then aim to gain knowledge of $\beta$ through the realization of $x_{\rm obs}$. In a Bayesian framework this leads to a posterior distribution of the form \eqref{eq: pi_Z} with $\Theta = (0,\infty)$ and 
	$\Phi(\beta)=\beta H(x_{\rm obs})$ and $Z(\beta)$. 
	Note that the Ising model fits into this framework: Let $(V,E)$ be a graph with (non-empty) vertex set $E$, edge set $V\subset E\times E$ and $G=\{-1,1\}^{E}$ with $H(x) = -\sum_{(e,e')\in V} x(e)x(e')$.
\end{example}	

Having such an example in mind it is reasonable to recover $Z$ by an approximation $\widetilde{Z}$ and to gain knowledge of the posterior distribution by sampling w.r.t.~$\pi_{\wt Z}$ (which is hopefully close to $\pi_Z$). A theoretical justification of that approach requires a stability investigation of $\pi_Z$ w.r.t.~$Z$.

% Main result
For quantifying stability properties of probability measures we need to introduce how we want to measure the difference of distributions. 
For this we use either the total variation or the $1$-Wasserstein distance based on the metric $d$ on $(\Theta,\mathcal{B}(\Theta))$. 
Let us mention here that we only consider the 1-Wasserstein distance and therefore will simply refer to it as the Wasserstein distance. 
%The 1-Wasserstein distance (hereafter will be simply referred to as the Wasserstein distance) 
The main results of this note, stated and proven in Section~\ref{sec: stab_res}, are upper bounds on the difference of $\pi_Z$ and $\pi_{\wt Z}$ in terms of $Z$ and $\wt Z$ with respect to that distances. We use the following notation. For a measure $\nu$ on $(\Theta,\mathcal{B}(\Theta))$ we denote the $L^p(\nu)$-norm, for $p\geq1$, by $\Vert \cdot \Vert_{\nu,p}$, that is, for measurable $f\colon \Theta \to \mathbb{R}$ we have
$
\Vert f \Vert_{\nu,p} := \left( \int_\Theta \vert f(\theta) \vert^p \nu({\rm d} \theta)\right)^{1/p}. 
$  
In Theorem~\ref{thm: tv_est} and Theorem~\ref{thm: Wass_dist} we prove that the total variation and the Wasserstein distance (w.r.t.~$d$) of $\pi_Z$ and $\pi_{\wt{Z}}$ is smaller than a constant times
\[
\left \Vert \frac{ Z}{ \wt Z} -1 \right \Vert_{\pi_{ Z},2}.
\]

For the total variation distance the result holds also with the $L^1(\pi_Z)$-norm instead of the $L^2(\pi_Z)$-norm on the right-hand side.
In addition to that we provide a number of consequences under some further regularity conditions. For example, if $\Vert \exp(-\Phi)/ Z \Vert_{\mu,p} \leq K$ and $\inf_{\theta\in \Theta} \wt Z(\theta) \geq \ell$  
for some $p\in [1,\infty]$, some $K<\infty$ and $\ell>0$, then
\begin{equation}
\label{eq: Lipsch}
\frac{2 K}{\ell C_{Z}} \left\Vert Z - \wt Z \right\Vert_{\mu,p/(p-1)}
\end{equation}
is again an upper bound of the total variation distance. 
Under some additional moment conditions on $\mu$ and $\Vert \exp(-\Phi)/ \wt Z \Vert_{\mu,p} \leq K$ a similar estimate is verified for the Wasserstein distance, see Corollary~\ref{cor: Wass}.
Note that from \eqref{eq: Lipsch} one can conclude a
local Lipschitz continuity of the mapping $Z\mapsto \pi_Z$ from a subset of $L_{\mu,p/(p-1)}$-functions to the set of probability measures on $(\Theta,\mathcal{B}(\Theta))$.

If $Z$ takes the role of a normalizing constant, as in Example~\ref{ex: Boltzmann} above, an approximation $\wt Z$ of $Z$ by numerical integration is natural.
For this purpose Monte Carlo integration or Monte Carlo estimators, respectively, of $Z$ are a common choice. 
Since Monte Carlo methods yield random approximations $\wt Z$, we also conduct a stability analysis allowing for randomized recovery algorithms of $Z$ leading to random probability measures $\pi_{\wt Z}$.
In that randomized scenario we provide estimates of the expected total variation, see Corollary~\ref{cor: expect_tv}, and the expected Wasserstein distance, see Theorem~\ref{thm: expected_Wass}. The total variation result reads as follows
\begin{align*}
\mathbb{E} \Vert \pi_Z - \pi_{\wt Z(\cdot)} \Vert_{\rm tv} 
& \leq 2 \int_{\Theta} \Big( \mathbb{E}\Big \vert \frac{\wt Z(\cdot,\theta)}{Z(\theta)}-1 \Big \vert^2 \Big)^{1/2} \Big(\mathbb{E}\Big [ \frac{Z(\theta)}{\wt Z(\cdot,\theta)} \Big]^2\Big)^{1/2} \pi_Z(\dint \theta).
\end{align*}
Thus, the upper estimate depends on an averaged relative second moment difference of $\wt Z$ and $Z$. 
We apply our randomized stability results to Monte Carlo approximations of $Z$ in two particular examples including the Gibbs distribution of Example~\ref{ex: Boltzmann}.

% Literature review
In the following we discuss how our results fit into the literature. The study of stability properties w.r.t.~posterior distributions in Bayesian inference attracted in recent years considerable attention, see e.g. \cite{dashtistuart17,latz2019well,sprungk2020local,St10}. In the work \cite{sprungk2020local} local Lipschitz continuity for bounded likelihood functions   $\theta \mapsto \exp(-\Phi(\theta))/Z(\theta)$ has been investigated and in   \cite{latz2019well} continuity results of posterior distributions w.r.t.~perturbations within the observed data are proven. In contrast to \cite{sprungk2020local} a consequence of our main estimate is local Lipschitz continuity also for possibly unbounded likelihood functions and in contrast to the continuity study in \cite{latz2019well} we focus on quantitative rather than qualitative results. 
In Bayesian statistics a number of Markov chain approaches have been developed for approximate sampling of $\pi_Z$, for a comprehensive review we refer to \cite{park2018bayesian}. One can distinguish two different types of approaches. The exact one, see \cite{murray22amp,moller2006efficient}, where a Markov chain with limit distribution $\pi_Z$ is constructed and the inexact one, where whenever a function evaluation of $Z$ is needed, an approximation of it is used. The inexact setting leads to Markov chains which not necessarily target $\pi_Z$, but another distribution that is (hopefully) close to $\pi_Z$. In particular, the noisy Markov chain approach, for example investigated in \cite{alquier2016noisy,medina2019perturbation,rudolf2018perturbation} falls into this category. 
In addition to that also adaptive Markov chain Monte Carlo approaches have been developed, see \cite{atchade2013bayesian,liang2016adaptive}. The inexact setting is closely related to our work, since there for a given $\theta\in \Theta$ Monte Carlo approximations of $Z(\theta)$ are employed. 
A similar estimate as the one w.r.t. the expected total variation distance follows from \cite[Theorem~3.2]{lie2018random} and \cite[Theorem~1]{lie2019error} under slightly different boundedness assumptions on the (random) likelihood. However, the estimate of the expected Wasserstein distance seems to be new.

%The 1-Wasserstein distance (hereafter will be simply referred to as the Wasserstein distance) 

The outline of our work is as follows: 
In the next section we state and prove our stability results. We introduce the total variation and Wasserstein distance as well as defining related quantities which we need for the formulation of our results. Furthermore, for a random function $\wt Z$ 
we provide estimates on the expected total variation and Wasserstein distance.
Finally, we illustrate our bounds in a simple Monte Carlo recovery scenario and in a Gibbs distribution posterior setting. In the latter we use a multiple importance sampling approach.

\section{Stability results}\label{sec: stab_res}
First, we derive bounds of $\pi_Z$ and $\pi_{\wt Z}$ in the total variation and the Wasserstein distance.
After that we state stability results for the more general case of $\wt Z(\theta)$ being a random variable for any $\theta\in \Theta$.

\subsection{Total variation distance}
Given two probability measures $\nu_1,\nu_2$ on $(\Theta, \mathcal{B}(\Theta))$ we define the total variation distance of $\nu_1$ and $\nu_2$ by

\[
\Vert \nu_1 - \nu_2 \Vert_{\rm tv} := \sup_{\vert f \vert_{\infty} \leq 1} \left \vert \mathbb{E}_{\nu_1}(f) - \mathbb{E}_{\nu_2}(f) \right \vert
\]
where $\vert f \vert_{\infty} := \sup_{\theta\in\Theta} \vert f(\theta) \vert$ and $\mathbb{E}_{\nu_1}(f):= \int_\Theta f(\theta) \nu_1(\dint \theta)$ for measurable $f\colon \Theta \to \mathbb{R}$.
\begin{theorem}  \label{thm: tv_est}
	Suppose that $Z\colon \Theta \to (0,\infty)$ and $\widetilde{Z} \colon \Theta \to (0,\infty)$ are measurable functions. Then, for $\pi_Z$ and $\pi_{\widetilde Z}$ as in \eqref{eq: pi_Z}, we have
	\begin{equation}
	\label{eq: tv_est}
	\Vert \pi_Z - \pi_{\widetilde{Z}} \Vert_{{\rm tv}} \leq 2 \left \Vert \frac{ Z}{ \wt Z} -1 \right \Vert_{\pi_{ Z},1}.
	\end{equation}
	% 	where $\Vert \cdot \Vert_{\pi_{\wt Z},1}$ denotes the $L_1(\pi_{\wt Z})$-norm.
\end{theorem}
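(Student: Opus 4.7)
The natural approach is to work with densities of both measures with respect to the common reference measure $\mu$. With that convention of the total variation distance (sup over $|f|_\infty \le 1$), one has the representation $\|\pi_Z - \pi_{\wt Z}\|_{\rm tv} = \int_\Theta |p_Z(\theta) - p_{\wt Z}(\theta)|\,\mu({\rm d}\theta)$, where $p_Z = \frac{\exp(-\Phi)}{C_Z Z}$ and $p_{\wt Z}=\frac{\exp(-\Phi)}{C_{\wt Z}\wt Z}$ denote the Lebesgue densities from \eqref{eq: pi_Z}. First, I would pull out the factor $p_Z$ inside the absolute value in order to replace $\mu$-integration by $\pi_Z$-integration, which yields
\[
\|\pi_Z - \pi_{\wt Z}\|_{\rm tv} = \int_\Theta \left|1 - \frac{C_Z}{C_{\wt Z}} \cdot \frac{Z(\theta)}{\wt Z(\theta)}\right| \pi_Z({\rm d}\theta).
\]

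The crucial observation is that the unknown normalising ratio can itself be expressed as a $\pi_Z$-expectation, namely
\[
\frac{C_{\wt Z}}{C_Z} = \int_\Theta \frac{Z(\theta)}{\wt Z(\theta)}\,\pi_Z({\rm d}\theta) = \mathbb{E}_{\pi_Z}\!\left[\frac{Z}{\wt Z}\right].
\]
Abbreviating $g := Z/\wt Z$ and $m := \mathbb{E}_{\pi_Z}[g]$, the above identity reduces the claim to showing $\mathbb{E}_{\pi_Z}\!\left[\left|1 - g/m\right|\right] \le 2\, \mathbb{E}_{\pi_Z}[|g-1|]$.

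For this inequality I would use the splitting $1 - g/m = (1-g) + g\,(m-1)/m$ together with the triangle inequality to obtain
\[
\left|1 - \frac{g}{m}\right| \le |1 - g| + \frac{g}{m}\,|m - 1|.
\]
Since $g \ge 0$, integrating against $\pi_Z$ gives $\mathbb{E}_{\pi_Z}[|1-g/m|] \le \mathbb{E}_{\pi_Z}[|1-g|] + |m-1|$, and then Jensen's inequality yields $|m - 1| = |\mathbb{E}_{\pi_Z}[g-1]| \le \mathbb{E}_{\pi_Z}[|g-1|]$, which combined produces the factor $2$ on the right-hand side of \eqref{eq: tv_est}.

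I do not expect a serious obstacle here: the only non-automatic step is finding the right additive decomposition that exposes the error $m-1$ in the normalising constants, and Jensen then controls that error by the same quantity $\mathbb{E}_{\pi_Z}[|g-1|]$ appearing on the right-hand side. Note also that the argument automatically gives the $L^p(\pi_Z)$-version for $p\ge 1$ (and, via Hölder's inequality, the $L^2(\pi_Z)$-bound announced in the introduction), because $|m-1| \le \|g-1\|_{\pi_Z,p}$ holds for every $p\ge 1$.
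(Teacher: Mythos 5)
Your proof is correct and, despite the different packaging, is essentially the paper's argument: writing $p_Z - p_{\wt Z} = p_Z(1-g) + p_Z\,g\,(m-1)/m$ with $g=Z/\wt Z$ and $m=C_{\wt Z}/C_Z$ reproduces exactly the paper's two-term decomposition into a $|Z-\wt Z|$ part and a $|C_Z-C_{\wt Z}|$ part, and your Jensen step is the paper's estimate \eqref{al: est_Cs}. No gaps; the observation that the argument extends to $L^p(\pi_Z)$-norms is a nice bonus.
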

\begin{proof}
	For a measurable function $f\colon \Theta \to \mathbb{R}$ one has
	\begin{align}
	\label{al: representation_diff_means}
	&\quad	\vert\mathbb{E}_{\pi_Z}(f) - \mathbb{E}_{\pi_{\widetilde{Z}}}(f)\vert
	= \left \vert \int_\Theta f(\theta)\exp(-\Phi(\theta))\left[ \frac{1}{\widetilde{Z}(\theta)C_{\wt Z}} 
	- 	
	\frac{1}{Z(\theta) C_{Z}} 
	\right] \mu({\rm d}\theta) \right\vert.
	\end{align}
	In order to bound the right-hand side, we note
	\begin{align}
	\label{al: simple_and_crucial_estimate}
	&
	\left\vert \frac{1}{\widetilde{Z}(\theta)C_{\wt Z} } 
	- 	
	\frac{1}{Z(\theta) C_{Z} } 
	\right\vert 
	\leq 
	\frac{\vert C_Z -  C_{\wt Z}  \vert}{C_{\wt Z}C_{Z} \wt Z(\theta) }
	+ \frac{\vert  Z(\theta) - \wt Z(\theta) \vert}{C_{ Z} \wt Z(\theta) Z(\theta)},
	\end{align}
	such that, with finite $\vert f \vert_\infty := \sup_{\theta\in \Theta} \vert f(\theta) \vert$, we obtain
	\begin{align*}
	&\quad	\vert\mathbb{E}_{\pi_Z}(f) - \mathbb{E}_{\pi_{\widetilde{Z}}}(f)\vert
	\leq \vert f \vert_{\infty}  \frac{\vert C_Z- C_{\wt Z}  \vert}{C_{Z}}
	+ \vert f \vert_{\infty} \left\Vert \frac{Z}{\wt Z}-1 \right\Vert_{\pi_{Z},1}.
	\end{align*}
	Furthermore
	\begin{align}
	\label{al: est_Cs}
	\frac{\vert C_{\wt Z}- C_Z  \vert}{C_{ Z}}
	\leq \frac{1}{C_{ Z}} \int_\Theta \exp(-\Phi(\theta))\left\vert 
	\frac{1}{\wt Z(\theta)} - \frac{1}{Z(\theta)}
	\right \vert \mu({\rm d}\theta) 
	= \left \Vert \frac{ Z}{\wt Z} -1 \right \Vert_{\pi_{ Z},1}.
	\end{align}
	By 
	% 	the characterization 
	$\Vert \pi_Z - \pi_{\widetilde{Z}} \Vert_{{\rm tv}}
	= \sup_{\vert f \vert_{\infty}\leq 1} \vert\mathbb{E}_{\pi_Z}(f) - \mathbb{E}_{\pi_{\widetilde{Z}}}(f)\vert
	$ the statement follows.
\end{proof}
Let us provide some remarks and consequences.
\begin{remark}
	The previous estimate is not sharp in the following sense. Set $\wt Z = c Z$ for some constant $c>0$, then the left-hand side is zero, but the right-hand side in general not. This deficiency can be easily repaired by using the fact that $\pi_{\wt Z} = \pi_{\wt c\wt Z}$ for any arbitrary constant $\wt c>0$. With this fact Theorem~\ref{thm: tv_est} implies readily that
	\begin{equation}
	\label{eq: conseq_with_inf}
	\Vert \pi_Z - \pi_{\widetilde{Z}} \Vert_{{\rm tv}} \leq 2 \inf_{\wt c>0} \left \Vert \frac{ Z}{\wt c \wt Z} -1 \right \Vert_{\pi_{ Z},1}.
	\end{equation}
	For the new bound if $\wt Z=c Z$ (with $c>0$), the left- and right-hand side are both zero.
	In particular, for the slightly more conservative upper bound using the $L^2(\pi_Z)$-norm instead of the $L^1(\pi_Z)$-norm on the right-hand side of \eqref{eq: conseq_with_inf}, one can derive that
	\[
	\Vert \pi_Z - \pi_{\widetilde{Z}} \Vert_{{\rm tv}} 
	\leq 2 \left \Vert \frac{\|Z/\wt Z\|_{\pi_Z,1}}{\|Z/\wt Z\|^2_{\pi_Z,2}}\ \frac{ Z}{\wt Z} -1 \right \Vert_{\pi_{ Z},2}
	=
	\inf_{\wt c>0} \left \Vert \frac{ Z}{\wt c \wt Z} -1 \right \Vert_{\pi_{ Z},2}.
	\]
\end{remark}
\begin{remark}  \label{rem: rem_min}
	By interchanging the roles of $Z$ and $\wt Z$ in \eqref{eq: tv_est} one easily obtains
	\[
	\Vert \pi_Z - \pi_{\widetilde{Z}} \Vert_{{\rm tv}} \leq 2 
	\min\left\{
	\left \Vert \frac{\wt Z}{Z} -1 \right \Vert_{\pi_{\wt Z},1}, 
	\left \Vert \frac{Z}{\wt Z} -1 \right \Vert_{\pi_{ Z},1}
	\right\}.
	\] 
\end{remark}
In the light of \cite{sprungk2020local} one might ask for local Lipschitz continuity of the mapping $Z\mapsto \pi_Z$.
\begin{corollary}  \label{cor: first_cons}
	Suppose that $\inf_{\theta\in \Theta} \wt Z(\theta) \geq \ell $ for some $\ell>0$, then
	\[
	\Vert \pi_Z - \pi_{\widetilde{Z}} \Vert_{{\rm tv}} 
	\leq \frac{2}{\ell} \left\Vert \wt Z - Z \right\Vert_{\pi_{Z},1}.
	\]
\end{corollary}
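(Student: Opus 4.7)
The plan is to apply Theorem~\ref{thm: tv_est} directly and then bound the $L^1(\pi_Z)$-norm of $Z/\wt Z - 1$ by exploiting the uniform lower bound $\wt Z \geq \ell$ to pull the denominator out in absolute value.

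First I would rewrite the integrand $\frac{Z(\theta)}{\wt Z(\theta)} - 1 = \frac{Z(\theta) - \wt Z(\theta)}{\wt Z(\theta)}$ pointwise in $\theta \in \Theta$. Since by assumption $\wt Z(\theta) \geq \ell > 0$ for every $\theta$, we obtain the pointwise estimate
\[
\left| \frac{Z(\theta)}{\wt Z(\theta)} - 1 \right| \leq \frac{|Z(\theta) - \wt Z(\theta)|}{\ell}.
\]
Integrating against $\pi_Z$ yields
\[
\left\Vert \frac{Z}{\wt Z} - 1 \right\Vert_{\pi_Z, 1} \leq \frac{1}{\ell} \Vert Z - \wt Z \Vert_{\pi_Z, 1}.
\]

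Finally I would plug this bound into \eqref{eq: tv_est} from Theorem~\ref{thm: tv_est} to obtain the claimed factor $2/\ell$. Since there are no sharp estimates to chase or technical obstacles—the lower bound on $\wt Z$ is designed precisely to make the denominator harmless—I do not anticipate any substantive difficulty; the whole argument is essentially a two-line substitution. The only minor thing to note is symmetry of $|Z - \wt Z|$, which allows writing the right-hand side as $\Vert \wt Z - Z \Vert_{\pi_Z,1}$ to match the statement of the corollary.
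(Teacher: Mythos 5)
Your proof is correct and is exactly the argument the paper intends (the corollary is stated without proof, as an immediate consequence of Theorem~\ref{thm: tv_est}): write $Z/\wt Z - 1 = (Z-\wt Z)/\wt Z$, bound the denominator below by $\ell$, and integrate against $\pi_Z$. Nothing further is needed.
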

Note that in Theorem~\ref{thm: tv_est} and in the previous corollary on the right-hand side the norm itself already depends on $Z$. One might argue that this hides some $Z$-dependence. Under an additional requirement we can remove this dependence by applying H\"older's inequality.  
\begin{corollary}
	Suppose that for $p\in[1,\infty]$ we have 
	$\Vert \exp(-\Phi)/ Z \Vert_{\mu,p} \leq K$ for some $K <\infty$, then
	\[
	\left\Vert \pi_Z - \pi_{\widetilde{Z}} \right\Vert_{{\rm tv}} 
	\leq \frac{2K}{C_{Z}} \left\Vert \frac{Z}{\wt Z} - 1 \right\Vert_{\mu,p/(p-1)}.
	\]
	If additionally  $\inf_{\theta\in \Theta} \wt Z(\theta) \geq \ell $ for some $\ell>0$ then
	\[
	\left\Vert \pi_Z - \pi_{\widetilde{Z}} \right\Vert_{{\rm tv}} 
	\leq \frac{2K}{\ell C_{Z}} \left\Vert Z - \wt Z \right\Vert_{\mu,p/(p-1)}.
	\]
\end{corollary}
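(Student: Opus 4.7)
The plan is to start directly from Theorem~\ref{thm: tv_est}, rewrite the $\pi_Z$-norm as an integral against the reference measure $\mu$, and then separate the two factors by H\"older's inequality.

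First, expand the bound \eqref{eq: tv_est} by the definition of $\pi_Z$:
\[
\left\Vert \pi_Z - \pi_{\widetilde Z}\right\Vert_{\rm tv}
\leq 2 \left\Vert \frac{Z}{\wt Z}-1\right\Vert_{\pi_Z,1}
= \frac{2}{C_Z}\int_{\Theta} \left|\frac{Z(\theta)}{\wt Z(\theta)}-1\right|\,\frac{\exp(-\Phi(\theta))}{Z(\theta)}\,\mu(\dint\theta).
\]
Next, apply H\"older's inequality on $(\Theta,\Borel(\Theta),\mu)$ with conjugate exponents $p$ and $p/(p-1)$ (with the usual conventions $p=1\leftrightarrow p/(p-1)=\infty$ and $p=\infty\leftrightarrow p/(p-1)=1$) to the two factors $\exp(-\Phi)/Z$ and $|Z/\wt Z - 1|$. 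Using the hypothesis $\Vert \exp(-\Phi)/Z\Vert_{\mu,p}\leq K$, this yields the first displayed estimate
\[
\left\Vert \pi_Z - \pi_{\widetilde Z}\right\Vert_{\rm tv}
\leq \frac{2K}{C_Z}\left\Vert \frac{Z}{\wt Z}-1\right\Vert_{\mu,p/(p-1)}.
\]

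For the second estimate, observe that under the additional assumption $\inf_{\theta\in\Theta}\wt Z(\theta)\geq\ell$ we have the pointwise bound
\[
\left|\frac{Z(\theta)}{\wt Z(\theta)}-1\right| = \frac{|Z(\theta)-\wt Z(\theta)|}{\wt Z(\theta)} \leq \frac{1}{\ell}\,|Z(\theta)-\wt Z(\theta)|,
\]
so that $\Vert Z/\wt Z-1\Vert_{\mu,p/(p-1)}\leq \ell^{-1}\Vert Z-\wt Z\Vert_{\mu,p/(p-1)}$, and plugging this into the first bound gives the claim.

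I do not expect any real obstacle; the only point requiring mild care is bookkeeping of the H\"older exponents in the two boundary cases $p=1$ and $p=\infty$, but these follow the standard convention and the argument above is identical in each case.
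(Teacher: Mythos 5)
Your proof is correct and matches the paper's intended argument: the corollary is stated without an explicit proof, but the surrounding text indicates it follows from Theorem~\ref{thm: tv_est} by H\"older's inequality applied to the $L^1(\pi_Z)$-norm, which is exactly what you do, and your pointwise bound $|Z/\wt Z-1|\leq \ell^{-1}|Z-\wt Z|$ for the second estimate is the standard step the paper also uses elsewhere (cf.\ Corollary~\ref{cor: first_cons}).
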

The last inequality provides a local Lipschitz continuity w.r.t.~the $L_{p/(p-1)}(\mu)$-norm in contrast to the local Lipschitz continuity w.r.t the $L_1(\pi_Z)$-norm of Corollary~\ref{cor: first_cons}. For $p=\infty$ it is essentially the estimate of \cite[Theorem~8]{sprungk2020local} in our context.
\begin{remark}
	In the light of the final estimate in the previous corollary let us explain in more detail what we mean with local Lipschitz continuity: Define the set $\mathcal{Z}_{\ell,p,K}$ for $\ell>0$, $K<\infty$ and $1\leq p \leq \infty$ by all measurable functions 
	$Z\colon \Theta\to [\ell,\infty)$ with $Z\in L_{p/(p-1)}(\mu)$ and $\Vert \exp(-\Phi)/ Z \Vert_{\mu,p} \leq K$. Then, for any $Z,\wt Z\in \mathcal{Z}_{\ell,p,K}$ there exists a constant $R_Z<\infty$ such that
	\[
	\Vert \pi_Z -\pi_{\widetilde Z} \Vert_{\rm tv} \leq R_Z  \left\Vert Z - \wt Z \right\Vert_{\mu,p/(p-1)}.
	\]
\end{remark}

\subsection{Wasserstein distance}
In the recent years the Wasserstein distance has become a standard tool in applied probability and statistics, see for example \cite{latz2019well,panaretos2019statistical,rudolf2018perturbation,sprungk2020local}.
One advantage of this distance is that it takes topological properties of the metric space $(\Theta,d)$ into account, which provides a certain flexibility. For instance, for $\theta,\widebar{\theta} \in \Theta$ the Wasserstein distance of the Dirac measures $\delta_\theta$ and $\delta_{\widetilde \theta}$ goes to zero when $d(\theta,\widetilde{\theta}) \to 0$.

Let us briefly provide definitions and basic facts. For probability distributions $\nu_1,\nu_2$ on $(\Theta,\mathcal{B}(\Theta))$ the Wasserstein distance of $\nu_1$ and $\nu_2$ is given by
\[
W(\nu_1,\nu_2) := \inf_{\eta \in C(\nu_1,\nu_2)} \int_{\Theta\times \Theta} d(\theta_1,\theta_2)\; \eta(\dint(\theta_1,\theta_2) ),
\]  
where $C(\nu_1,\nu_2)$ denotes the set of couplings of $\nu_1$ and $\nu_2$, that is, a probability measure $\eta$ on $\Theta\times \Theta$ belongs to $C(\nu_1,\nu_2)$ (by definition) if $\eta(A\times \Theta) = \nu_1(A)$ and $\eta(\Theta\times A)= \nu_2(A)$ for any $A\in\mathcal{B}(\Theta)$. For measurable $f\colon \Theta \to \mathbb{R}$ define the Lipschitz semi-norm
\[
\Vert f \Vert_{\rm Lip} := \sup_{\theta_1,\theta_2 \in \Theta,\;\theta_1\neq\theta_2} \frac{\vert f(\theta_1) - f(\theta_2) \vert}{d(\theta_1,\theta_2)}
\]
and note that the Wasserstein distance allows a dual representation. For arbitrary $\theta_0\in \Theta$ the \emph{Kantorovich-Rubinstein duality}, for details see \cite{Vi09}, is given by
\begin{equation}
\label{eq: Kant_Rub_duality}
W(\nu_1,\nu_2) = \sup_{\Vert f \Vert_{\rm Lip}\leq 1, f(\theta_0)=0} \left \vert  \mathbb{E}_{\nu_1}(f) - \mathbb{E}_{\nu_2}(f) \right \vert.
\end{equation}

\begin{remark}
	Note that we only consider the $1$-Wasserstein distance in contrast to the general $p$-Wasserstein distance given by
	\[
	W_p(\nu_1,\nu_2) := \inf_{\eta \in C(\nu_1,\nu_2)} \left[\int_{\Theta\times \Theta} d(\theta_1,\theta_2)^p\; \eta(\dint(\theta_1,\theta_2) )\right]^{1/p}
	\]
	for $p\geq 1$. The reason for that lies in the fact that we essentially use the Kantorovich-Rubinstein duality. For the $p$-Wasserstein distance there is also a dual characterization, see \cite{Vi09}, which is more involved and does not readily allow to employ \eqref{al: simple_and_crucial_estimate} for estimating $W_p$ in the case of $p>1$.
\end{remark}

Furthermore, for a measure $\nu$ on $(\Theta,\mathcal{B}(\Theta))$ and $p\geq 1$ define
\[
\vert \nu \vert^{(p)} := \inf_{\theta_0 \in \Theta} \left( \int_\Theta d(\theta_0,\theta)^p \nu(\dint \theta) \right)^{1/p}.
\]  
Note that if the metric $d$ is bounded, that is, $\sup_{\theta_1, \theta_2\in \Theta } d(\theta_1,\theta_2) \leq D$ for some $D<\infty$, then $\vert \nu \vert^{(p)} \leq D$.
Now we are able to formulate the Wasserstein stability estimate.

\begin{theorem}  \label{thm: Wass_dist}
	Suppose that $Z\colon \Theta \to (0,\infty)$ and $\widetilde{Z} \colon \Theta \to (0,\infty)$ are measurable functions. Then, for $\pi_Z$ and $\pi_{\widetilde Z}$ as in \eqref{eq: pi_Z}, we have
	\begin{align*}
	W(\pi_Z,\pi_{\widetilde{Z}}) 
	\leq & \left\Vert \frac{ Z}{\wt Z}-1 \right\Vert_{\pi_{Z},1} 
	\left \vert \pi_{\wt Z} \right \vert^{(1)} 
	+ \left\Vert \frac{Z}{\wt Z}-1 \right\Vert_{\pi_{{Z}},2}  	\left \vert \pi_{ Z} \right \vert^{(2)} \\
	\leq & \left\Vert \frac{ Z}{\wt Z}-1 \right\Vert_{\pi_{Z},2} \left( 	\left \vert \pi_{\wt Z} \right \vert^{(1)} + 	\left \vert \pi_{ Z} \right \vert^{(2)}	\right) . 
	\end{align*}
	Moreover, if $Z$ and $\tilde Z$ are sufficiently close to each other, that is, $\left\Vert \frac{Z}{\wt Z}-1 \right\Vert_{\pi_{{Z}},2}\leq 1 - \varepsilon$, for an $\varepsilon \in (0,1)$, then
	\begin{equation} \label{eq: with_moment_of_pi_wt_Z}
	W(\pi_Z,\pi_{\widetilde{Z}}) 
	\leq 
	\frac{2}{\varepsilon} \left \vert \pi_{ Z} \right \vert^{(2)}\  \left\Vert \frac{ Z}{\wt Z}-1 \right\Vert_{\pi_{Z},2}.
	\end{equation}
\end{theorem}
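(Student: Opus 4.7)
My plan is to combine Kantorovich--Rubinstein duality~\eqref{eq: Kant_Rub_duality} with the pointwise density estimate already used in the proof of Theorem~\ref{thm: tv_est}, exploit the Lipschitz property of test functions together with the shift invariance of the dual, and finally bootstrap via a moment-transfer argument for the $\varepsilon$-bound.

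As a first step, for any measurable $f \colon \Theta \to \R$, I would reuse~\eqref{al: representation_diff_means}, \eqref{al: simple_and_crucial_estimate} and~\eqref{al: est_Cs} to obtain
\[
\left\vert \mathbb{E}_{\pi_Z}(f) - \mathbb{E}_{\pi_{\wt Z}}(f) \right\vert \leq \left\Vert \tfrac{Z}{\wt Z} - 1 \right\Vert_{\pi_Z, 1} \int_\Theta |f|\, \dint \pi_{\wt Z} + \int_\Theta |f(\theta)|\, \left\vert \tfrac{Z(\theta)}{\wt Z(\theta)} - 1 \right\vert \pi_Z(\dint \theta),
\]
where the first summand comes from $|C_Z - C_{\wt Z}|/C_Z \leq \Vert Z/\wt Z - 1 \Vert_{\pi_Z, 1}$ via~\eqref{al: est_Cs}, and the Cauchy--Schwarz inequality bounds the second summand by $\bigl(\int |f|^2 \dint \pi_Z\bigr)^{1/2} \Vert Z/\wt Z - 1 \Vert_{\pi_Z, 2}$.

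Second, by~\eqref{eq: Kant_Rub_duality} the Wasserstein distance is the supremum of $|\mathbb{E}_{\pi_Z}(f) - \mathbb{E}_{\pi_{\wt Z}}(f)|$ over $1$-Lipschitz $f$, and this difference is unchanged under adding a constant to $f$. Replacing $f$ by $f - f(\theta_1)$ in the first summand yields $\int |f - f(\theta_1)| \dint \pi_{\wt Z} \leq \int d(\theta_1, \cdot) \dint \pi_{\wt Z}$, and replacing $f$ by $f - f(\theta_2)$ in the second yields $\bigl(\int d(\theta_2, \cdot)^2 \dint \pi_Z\bigr)^{1/2}$. The price of using unequal shifts in the two summands is a cross term of size $|\alpha|\,|f(\theta_1) - f(\theta_2)|$ with $\alpha := (C_{\wt Z} - C_Z)/C_Z$, which I would control via $|\alpha| \leq \Vert Z/\wt Z - 1 \Vert_{\pi_Z, 1}$ (from~\eqref{al: est_Cs}) and $|f(\theta_1) - f(\theta_2)| \leq d(\theta_1, \theta_2)$. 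Taking suprema over $f$ and then infimising $\theta_1$ and $\theta_2$ independently gives the first line, while the second line follows from Jensen's inequality, which yields $\Vert Z/\wt Z - 1 \Vert_{\pi_Z, 1} \leq \Vert Z/\wt Z - 1 \Vert_{\pi_Z, 2}$.

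For the final estimate~\eqref{eq: with_moment_of_pi_wt_Z}, the idea is to eliminate $|\pi_{\wt Z}|^{(1)}$ in favour of $|\pi_Z|^{(2)}$ and $W(\pi_Z, \pi_{\wt Z})$ itself. For any coupling $\eta \in C(\pi_Z, \pi_{\wt Z})$ and any $\theta_0 \in \Theta$, integrating the triangle inequality $d(\theta_0, \theta_2) \leq d(\theta_0, \theta_1) + d(\theta_1, \theta_2)$ against $\eta$ and infimising over $\eta$ produces $\int d(\theta_0, \cdot)\, \dint \pi_{\wt Z} \leq \int d(\theta_0, \cdot)\, \dint \pi_Z + W(\pi_Z, \pi_{\wt Z})$; choosing $\theta_0$ as a minimiser for $\pi_Z$ and using $|\pi_Z|^{(1)} \leq |\pi_Z|^{(2)}$ yields $|\pi_{\wt Z}|^{(1)} \leq |\pi_Z|^{(2)} + W(\pi_Z, \pi_{\wt Z})$. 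Substituting into the second line of the theorem with $B := \Vert Z/\wt Z - 1 \Vert_{\pi_Z, 2}$ gives $W \leq B\bigl(2 |\pi_Z|^{(2)} + W\bigr)$, and under the hypothesis $B \leq 1 - \varepsilon$, rearranging into $W(1-B) \leq 2 B\, |\pi_Z|^{(2)}$ and dividing by $1 - B \geq \varepsilon$ produces~\eqref{eq: with_moment_of_pi_wt_Z}.

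The step I expect to require the most care is the independent minimisation over the two reference points $\theta_1, \theta_2$ in the second step: arranging the shift-invariance argument so that the cross term $|\alpha|\, d(\theta_1, \theta_2)$ is absorbed cleanly, leaving the sum-of-infima structure of the first line intact, is the main technical subtlety of the whole proof.
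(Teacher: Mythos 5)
Your overall route coincides with the paper's: the identity \eqref{al: representation_diff_means}, the splitting \eqref{al: simple_and_crucial_estimate}, the bound \eqref{al: est_Cs} for $\vert C_Z-C_{\wt Z}\vert/C_Z$, Cauchy--Schwarz for the second summand, and the bootstrap $\vert\pi_{\wt Z}\vert^{(1)}\leq\vert\pi_Z\vert^{(2)}+W(\pi_Z,\pi_{\wt Z})$ followed by rearrangement with $1-B\geq\varepsilon$ for \eqref{eq: with_moment_of_pi_wt_Z}; all of those steps are correct. The one point where you deviate---using two independent reference points $\theta_1,\theta_2$---is exactly where your argument is not finished, and you say so yourself. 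The cross term you identify is real: writing $T_1(f)=\int_\Theta f\,\frac{\wt Z-Z}{\wt Z}\,\dint\pi_Z$ and $T_2(f)=\alpha\int_\Theta f\,\dint\pi_{\wt Z}$ with $\alpha=(C_{\wt Z}-C_Z)/C_Z$, one has $T_1(1)=-\alpha$ and $T_2(1)=\alpha$, so shifting the two summands by different constants produces exactly $\alpha\,(f(\theta_1)-f(\theta_2))$, and after bounding this by $\Vert Z/\wt Z-1\Vert_{\pi_Z,1}\,d(\theta_1,\theta_2)$ the term does \emph{not} vanish when $\theta_1$ and $\theta_2$ are infimised independently, since the minimisers of $\int d(\theta_1,\cdot)\,\dint\pi_{\wt Z}$ and of $\int d(\theta_2,\cdot)^2\,\dint\pi_Z$ need not coincide. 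As written, your proposal therefore proves the first line only up to the additional summand $\Vert Z/\wt Z-1\Vert_{\pi_Z,1}\,d(\theta_1^*,\theta_2^*)$; ``absorbing it cleanly'' is not a detail but the missing step, and I do not see how to make it disappear within your scheme.

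The repair is to give up on independent reference points: take $\theta_1=\theta_2=\theta_0$, so the cross term is identically zero and you obtain, for every $\theta_0$, the bound $W(\pi_Z,\pi_{\wt Z})\leq\Vert Z/\wt Z-1\Vert_{\pi_Z,1}\int_\Theta d(\theta_0,\cdot)\,\dint\pi_{\wt Z}+\Vert Z/\wt Z-1\Vert_{\pi_Z,2}\,\big(\int_\Theta d(\theta_0,\cdot)^2\,\dint\pi_Z\big)^{1/2}$. This is the paper's argument---which, it should be said, then also replaces $\inf_{\theta_0}(I_1+I_2)$ by $\inf_{\theta_0}I_1+\inf_{\theta_0}I_2$ without comment, i.e.\ commits the very imprecision you were trying to circumvent, so strictly speaking the first two displayed lines are only justified with a common reference point. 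The common-$\theta_0$ version suffices for everything downstream: choosing $\theta_0$ as a (near-)minimiser for $\vert\pi_Z\vert^{(2)}$ and inserting your (correct) coupling estimate $\int d(\theta_0,\cdot)\,\dint\pi_{\wt Z}\leq\int d(\theta_0,\cdot)\,\dint\pi_Z+W\leq\vert\pi_Z\vert^{(2)}+W$ gives $W\leq B\,(2\vert\pi_Z\vert^{(2)}+W)$ with $B:=\Vert Z/\wt Z-1\Vert_{\pi_Z,2}$, and your rearrangement yields \eqref{eq: with_moment_of_pi_wt_Z} (note that the rearrangement tacitly needs $W<\infty$, which holds when $\vert\pi_Z\vert^{(2)}$ and $\vert\pi_{\wt Z}\vert^{(1)}$ are finite and renders the claim vacuous otherwise---a technicality shared with the paper).
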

\begin{proof}
	For arbitrary $\theta_0\in\Theta$ by \eqref{eq: Kant_Rub_duality} it is sufficient to estimate the right-hand side of \eqref{al: representation_diff_means}. For this we again use \eqref{al: simple_and_crucial_estimate} and obtain
	\begin{equation}
	\label{eq: Wass_I1_I2_ineq}
	W(\pi_Z,\pi_{\wt Z}) \leq I_1 + I_2,
	\end{equation}
	where
	
	\begin{align*}
	I_1 & := \frac{\vert C_Z-C_{\wt Z}\vert}{C_Z}  \sup_{\Vert f \Vert_{\rm Lip}\leq 1, f(\theta_0)=0} \int_\Theta f(\theta) \pi_{\wt Z}(\dint \theta) \\
	I_2 & := \sup_{\Vert f \Vert_{\rm Lip}\leq 1, f(\theta_0)=0} \int_\Theta f(\theta) \frac{\vert Z(\theta)-\wt Z(\theta)\vert}{\wt Z(\theta)} \;\pi_{Z}(\dint \theta).
	\end{align*}
	By \eqref{al: est_Cs} and taking the infimum over $\theta_0 \in \Theta$ we obtain
	$
	I_1 \leq \left \Vert \frac{ Z}{\wt Z} -1 \right \Vert_{\pi_{ Z},1} \vert \pi_{\wt Z}\vert^{(1)}
	$
	and by additionally using the Cauchy--Schwarz inequality we get
	$
	I_2 \leq \left \Vert \frac{ Z}{\wt Z} -1 \right \Vert_{\pi_{ Z},2} \vert \pi_{Z}\vert^{(2)}
	$
	which proves the first assertion.
	The second statement follows easily from the first by
	\[
	\left \vert \pi_{\wt Z} \right \vert^{(1)}
	\leq
	\left \vert \pi_{Z} \right \vert^{(1)} + \left| \left \vert \pi_{\wt Z} \right \vert^{(1)} - \left \vert \pi_{Z} \right \vert^{(1)} \right|
	\leq
	\left \vert \pi_{Z} \right \vert^{(2)} + W(\pi_Z,\pi_{\widetilde{Z}}) 
	\]
	and rearranging the terms accordingly.
\end{proof}

\begin{remark}
	Having an approximation $\wt Z$ of $Z$ in mind, \eqref{eq: with_moment_of_pi_wt_Z} tells us that ``asymptotically'', that is, with sufficient accuracy of the recovery algorithm $\wt Z$, no explicit bound on $\vert \pi_{\wt Z} \vert^{(1)}$ is required.
\end{remark}

As in the total variation distance consideration, by a boundedness assumption and H\"older's inequality we can exchange the $\pi_Z$-dependence on the right-hand side, by an explicit $Z$-dependence.
\begin{corollary}
	\label{cor: Wass}
	Suppose for $p\in[1,\infty]$ that 
	\[
	\Vert \exp(-\Phi)/Z \Vert_{\mu,p} \leq K 
	\quad \text{and} \quad
	\Vert \exp(-\Phi)/\wt Z \Vert_{\mu,p} \leq K,
	\] for some $K<\infty$, then
	\[
	W(\pi_Z,\pi_{\widetilde{Z}}) \leq \frac{K \vert \mu \vert^{(2p/(p-1))}}{C_Z}\left(\frac{1}{C_Z}+\frac{1}{C_{\wt Z}}\right) \left\Vert \frac{ Z}{\wt Z}-1 \right\Vert_{\mu,2p/(p-1)}.
	\]
	In particular, 
	if additionally $\inf_{\theta\in \Theta} \wt Z(\theta) \geq \ell$ for some $\ell>0$, then
	\[
	W(\pi_Z,\pi_{\widetilde{Z}}) \leq \frac{K \vert \mu \vert^{(2p/(p-1))}}{\ell C_Z}\left(\frac{1}{C_Z}+\frac{1}{C_{\wt Z}}\right) \left\Vert Z-\wt Z \right\Vert_{\mu,2p/(p-1)}.
	\]
\end{corollary}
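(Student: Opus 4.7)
The plan is to apply H\"older's inequality to each of the four $\pi_{Z}$- or $\pi_{\wt Z}$-dependent quantities appearing on the right-hand side of Theorem~\ref{thm: Wass_dist}, so as to convert every such quantity into a $\mu$-norm controlled by the two hypotheses $\|\exp(-\Phi)/Z\|_{\mu,p}\le K$ and $\|\exp(-\Phi)/\wt Z\|_{\mu,p}\le K$. The second, stronger assertion will then follow from the first by a one-line pointwise bound: under $\inf\wt Z\ge \ell$ one has $|Z/\wt Z-1|=|Z-\wt Z|/\wt Z\le\ell^{-1}|Z-\wt Z|$, so $\|Z/\wt Z-1\|_{\mu,2p/(p-1)}\le\ell^{-1}\|Z-\wt Z\|_{\mu,2p/(p-1)}$ and the claimed estimate is immediate.

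Concretely, I would begin from the first inequality of Theorem~\ref{thm: Wass_dist},
\[
W(\pi_Z,\pi_{\wt Z})\le \left\|\tfrac{Z}{\wt Z}-1\right\|_{\pi_{Z},1}|\pi_{\wt Z}|^{(1)} + \left\|\tfrac{Z}{\wt Z}-1\right\|_{\pi_{Z},2}|\pi_{Z}|^{(2)},
\]
and record the following elementary consequence of H\"older's inequality with conjugate exponents $p$ and $p/(p-1)$: for any measurable $g\colon \Theta\to\R$ and any $q\ge 1$,
\[
\|g\|_{\pi_Z,q}^{q} = \frac{1}{C_Z}\int_\Theta |g|^{q}\,\frac{\exp(-\Phi)}{Z}\,\diff\mu \le \frac{K}{C_Z}\,\|g\|_{\mu,\,qp/(p-1)}^{q},
\]
together with its analogue obtained by replacing $(Z,C_Z)$ with $(\wt Z, C_{\wt Z})$. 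Choosing $g=Z/\wt Z-1$ and $q=2$ bounds $\|Z/\wt Z-1\|_{\pi_Z,2}$, and via the probability-measure monotonicity $\|\cdot\|_{\pi_Z,1}\le\|\cdot\|_{\pi_Z,2}$ it simultaneously bounds the $L^{1}$-term. Choosing $g(\theta)=d(\theta_0,\theta)$, $q=2$ and passing to the infimum over $\theta_0\in\Theta$ yields $|\pi_Z|^{(2)}\le\sqrt{K/C_Z}\,|\mu|^{(2p/(p-1))}$, and the $\wt Z$-analogue together with $|\pi_{\wt Z}|^{(1)}\le|\pi_{\wt Z}|^{(2)}$ (Cauchy--Schwarz on the probability measure $\pi_{\wt Z}$) handles the first moment of $\pi_{\wt Z}$ in the same fashion.

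Substituting these four estimates back into the displayed inequality and factoring out the common quantity $|\mu|^{(2p/(p-1))}\,\|Z/\wt Z-1\|_{\mu,2p/(p-1)}$ produces the first assertion of the corollary; applying the pointwise bound from the opening paragraph then gives the second. I expect the main obstacle to be the bookkeeping of the $C_Z, C_{\wt Z}$ prefactors at the very last step: the na\"ive chaining of the two square-root H\"older factors yields combined constants of the mixed form $\tfrac{1}{\sqrt{C_Z C_{\wt Z}}}$ and $\tfrac{1}{C_Z}$, and one has to choose carefully in each of the four terms whether to lift first from $L^{1}$ to $L^{2}$ and whether to apply H\"older against $\exp(-\Phi)/Z$ or against $\exp(-\Phi)/\wt Z$, so as to arrive at exactly the coefficient $\tfrac{K|\mu|^{(2p/(p-1))}}{C_Z}\bigl(\tfrac{1}{C_Z}+\tfrac{1}{C_{\wt Z}}\bigr)$ stated in the corollary.
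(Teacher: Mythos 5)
Your argument is correct and is essentially the paper's own proof: the same H\"older manipulations, applied to the first bound of Theorem~\ref{thm: Wass_dist}, to replace $\Vert Z/\wt Z-1\Vert_{\pi_Z,1}\le\Vert Z/\wt Z-1\Vert_{\pi_Z,2}$, $\vert\pi_{\wt Z}\vert^{(1)}\le\vert\pi_{\wt Z}\vert^{(2)}$ and the two second moments by $\mu$-quantities. The ``bookkeeping obstacle'' you flag is not a defect of your proof: the correct chaining does give the prefactor $K\,\vert\mu\vert^{(2p/(p-1))}\bigl(\tfrac{1}{\sqrt{C_Z C_{\wt Z}}}+\tfrac{1}{C_Z}\bigr)$, whereas the paper's intermediate steps write $\tfrac{\sqrt K}{C_Z}$ and $\tfrac{\sqrt K}{C_{\wt Z}}$ where H\"older actually yields $\sqrt{K/C_Z}$ and $\sqrt{K/C_{\wt Z}}$, i.e.\ the square roots on the normalizing constants have been dropped; this is how the stated coefficient $\tfrac{K\vert\mu\vert^{(2p/(p-1))}}{C_Z}\bigl(\tfrac{1}{C_Z}+\tfrac{1}{C_{\wt Z}}\bigr)$ arises, and it is only an upper bound for the correct one when $C_Z,C_{\wt Z}\le 1$. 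So do not contort the argument to reproduce the printed constant --- keep yours.
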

\begin{proof}
	Set $q:= p/(p-1)$ and note that $1/p+1/q=1$. 
	In the following we frequently apply H\"older's inequality. We have
	\begin{align*}
	\left \Vert \frac{ Z}{\wt Z}-1 \right\Vert_{\pi_{Z},2} 
	= \left\Vert \left( \frac{Z}{\wt Z}-1 \right)^2 \cdot \frac{\exp(-\Phi)}{Z C_Z} \right\Vert_{\mu,1}^{1/2}
	\leq \frac{\sqrt{K}}{C_Z} \left\Vert \frac{Z}{\wt Z}-1 \right \Vert_{\mu,2q}. 
	\end{align*}
	Furthermore, note that $\vert \pi_{\wt Z} \vert^{(1)} \leq \vert \pi_{\wt Z} \vert^{(2)}$ and
	\begin{align*}
	\vert \pi_{\wt Z} \vert^{(2)} = \inf_{\theta_0\in\Theta} \left\Vert d(\theta_0,\cdot)^2 \,\frac{\exp(-\Phi)}{\wt Z C_{\wt Z}} \right\Vert_{\mu,1}^{1/2}
	\leq \frac{\sqrt{K}}{C_{\wt Z}} \vert \mu \vert^{(2q)}.
	\end{align*}
	By the same arguments holds $\vert \pi_{Z} \vert^{(2)} \leq  \frac{\sqrt{K}}{C_{Z}} \vert \mu \vert^{(2q)}.$
	Thus, by taking the previous estimate of Theorem~\ref{thm: Wass_dist} into account the proof is concluded.
\end{proof}

\subsection{Randomization}
\label{sec: rand}
Let $(\Omega,\mathcal{F},\mathbb{P})$ be a probability space and let $\mathbb E$ denote the expectation w.r.t.~$\mathbb P$.
Furthermore, let $\wt Z \colon \Omega\times \Theta \to (0,\infty)$ be a jointly measurable function, thus $\wt Z(\cdot,\theta)$ is a random variable
for all $\theta\in \Theta$.
Then, we call $\wt Z$ random function.
By standard arguments the mapping $\omega \mapsto \Vert \pi_Z - \pi_{\wt Z(\omega,\cdot)} \Vert_{\rm tv}$ from $\Omega$ to $\mathbb{R}$ is measurable.
Similarly, if $\vert \pi_Z \vert^{(1)} <\infty$ and  
$\vert \pi_{\wt Z(\omega,\cdot)} \vert^{(1)} <\infty$ for any $\omega\in \Omega$, then also the mapping $\omega \mapsto W(\pi_Z,\pi_{\wt Z(\omega,\cdot)})$ is measurable. 

In this context Theorem~\ref{thm: tv_est} combined with 
a Fubini argument and Cauchy--Schwarz inequality
lead to the following consequence:
\begin{corollary}
	\label{cor: expect_tv}
	Suppose $Z\colon \Theta \to (0,\infty)$ is measurable and let $\wt Z\colon \Omega \times \Theta\to (0,\infty)$ be a random function. 
	Then, for $\pi_Z$ and $\pi_{\wt Z(\omega)}$, $\omega\in\Omega$, given according to \eqref{eq: pi_Z}, we have
	\begin{align}
	\notag
	\mathbb{E} \Vert \pi_Z - \pi_{\wt Z(\cdot)} \Vert_{\rm tv} 
	& \leq 2 \int_\Theta \mathbb{E} \left \vert \frac{Z(\theta)}{\wt Z(\cdot,\theta)} -1 \right \vert \pi_Z({\rm d} \theta)\\
	\label{eq: expected_tv2}	
	& \leq 2 \int_{\Theta} \Big( \mathbb{E}\Big \vert \frac{\wt Z(\cdot,\theta)}{Z(\theta)}-1 \Big \vert^2 \Big)^{1/2} \Big(\mathbb{E}\Big [ \frac{Z(\theta)}{\wt Z(\cdot,\theta)} \Big]^2\Big)^{1/2} \pi_Z(\dint \theta). 
	\end{align}
\end{corollary}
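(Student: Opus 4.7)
The plan is to apply Theorem~\ref{thm: tv_est} pointwise in $\omega \in \Omega$ and then integrate over $\omega$, using Fubini/Tonelli and Cauchy--Schwarz to obtain the two inequalities.

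First I would fix $\omega \in \Omega$ and invoke Theorem~\ref{thm: tv_est} with $\wt Z$ replaced by $\wt Z(\omega, \cdot)$, yielding
\[
\Vert \pi_Z - \pi_{\wt Z(\omega, \cdot)} \Vert_{\rm tv}
\leq 2 \int_\Theta \left| \frac{Z(\theta)}{\wt Z(\omega,\theta)} - 1 \right| \pi_Z(\dint \theta).
\]
Since the integrand is non-negative and jointly measurable in $(\omega,\theta)$ (by the assumed joint measurability of $\wt Z$ and measurability of $Z$), Tonelli's theorem allows us to take $\mathbb{E}$ on both sides and exchange the order of integration, giving the first asserted inequality.

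For the second inequality I would work under the integral $\pi_Z(\dint\theta)$ and bound the inner expectation $\mathbb{E}\bigl|\tfrac{Z(\theta)}{\wt Z(\cdot,\theta)} - 1\bigr|$ for each fixed $\theta$. The key algebraic identity is
\[
\left|\frac{Z(\theta)}{\wt Z(\cdot,\theta)} - 1\right|
= \frac{Z(\theta)}{\wt Z(\cdot,\theta)} \cdot \left|\frac{\wt Z(\cdot,\theta)}{Z(\theta)} - 1\right|,
\]
which holds because $Z$ and $\wt Z$ take values in $(0,\infty)$. Applying the Cauchy--Schwarz inequality on $(\Omega,\mathcal{F},\mathbb{P})$ to this product then yields
\[
\mathbb{E}\left|\frac{Z(\theta)}{\wt Z(\cdot,\theta)} - 1\right|
\leq \Bigl(\mathbb{E}\Bigl[\tfrac{Z(\theta)}{\wt Z(\cdot,\theta)}\Bigr]^2 \Bigr)^{1/2}
\Bigl(\mathbb{E}\Bigl|\tfrac{\wt Z(\cdot,\theta)}{Z(\theta)}-1\Bigr|^2\Bigr)^{1/2},
\]
and inserting this back under $\pi_Z(\dint\theta)$ produces the second bound in \eqref{eq: expected_tv2}.

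I do not expect real obstacles here: the proof is a straightforward combination of the already-proved total variation estimate, Tonelli, and Cauchy--Schwarz. The only point that requires a sentence of care is the measurability of $\omega \mapsto \Vert \pi_Z - \pi_{\wt Z(\omega,\cdot)}\Vert_{\rm tv}$, but this is remarked upon just before the statement of the corollary, so it can be taken for granted. The clever step worth highlighting is the factorization $\lvert Z/\wt Z - 1\rvert = (Z/\wt Z)\cdot\lvert \wt Z/Z - 1\rvert$, which is precisely what converts the awkward one-sided ratio into the symmetric relative-error quantity $|\wt Z/Z - 1|$ appearing in \eqref{eq: expected_tv2}, at the price of the multiplicative correction $Z/\wt Z$.
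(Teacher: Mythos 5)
Your proposal is correct and follows exactly the route the paper intends: apply Theorem~\ref{thm: tv_est} pointwise in $\omega$, exchange the order of integration by Tonelli/Fubini, and obtain \eqref{eq: expected_tv2} from the factorization $\bigl|\tfrac{Z}{\wt Z}-1\bigr| = \tfrac{Z}{\wt Z}\bigl|\tfrac{\wt Z}{Z}-1\bigr|$ together with the Cauchy--Schwarz inequality on $(\Omega,\mathcal{F},\mathbb{P})$. The paper gives no further detail beyond naming these ingredients, so there is nothing to add.
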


\begin{remark}
	The final estimate \eqref{eq: expected_tv2} follows immediately by the Cauchy--Schwarz inequality. For Monte Carlo recovery approximation $\wt Z(\cdot,\theta)$ of $Z$ it is usually convenient to bound the right-hand side of the second inequality in Corollary \ref{cor: expect_tv}.
	The reason behind is that the term $\mathbb E\left\vert \frac{\wt Z(\cdot,\theta)}{Z(\theta)}-1 \right\vert^2$ relates readily to the relative mean squared error of the method, whereas the ``reversed'' relative mean absolute error $\mathbb E \left \vert \frac{Z(\theta)}{\wt Z(\cdot,\theta)} -1 \right \vert$ is harder to bound directly.
\end{remark}
Using similar arguments as in the proof of Theorem~\ref{thm: Wass_dist} we obtain the following result w.r.t.~the expected Wasserstein distance.

\begin{theorem} \label{thm: expected_Wass}
	Suppose that $Z\colon \Theta \to (0,\infty)$, that $\wt Z \colon \Omega \times \Theta \to (0,\infty)$ is a random function and that for any $\omega\in \Omega$ the probability measure $\pi_{\wt Z(\omega)}$ and $\pi_Z$ are given through \eqref{eq: pi_Z}.
	
	\begin{enumerate}
		%[(i)]
		\item[(i)] \label{it: 1st_exp_Wass}
		Assume that for $\mathbb{P}$-almost any $\omega\in \Omega$ we have 
		$\vert \pi_{\wt Z(\omega)} \vert^{(1)} \leq R$ 
		for some $R<\infty$ and $\vert \pi_Z \vert^{(1)}<\infty$. 
		Then
		\begin{align}
		\label{al: 1st_est_exp_Wass}
		\mathbb{E} W(\pi_Z,& \pi_{\wt Z(\cdot)})
		\leq  
		\int_\Theta \mathbb{E} \left \vert \frac{Z(\theta)}{\wt Z(\cdot,
			\theta)}-1 \right \vert\, (d(\theta,\theta_0)+R) \; \pi_Z(\dint \theta),
		\end{align}
		for any $\theta_0\in\Theta$. In particular,
		
		\begin{equation*}
		\label{eq: 2nd_est_expl_Wass}
		\mathbb{E} W(\pi_Z,\pi_{\wt Z(\cdot)}) \leq 
		(\vert \pi_Z\vert^{(2)} + R) \left(
		\int_{\Theta} \mathbb{E} \left \vert \frac{\wt Z(\cdot,\theta)}{Z(\theta)} -1 \right \vert^2 \mathbb{E}\left[ \frac{Z(\theta)}{\wt Z(\cdot,\theta)} \right]^2 \pi_Z(\dint \theta)  
		\right)^{1/2}.
		\end{equation*}
		
		\item[(ii)] \label{it: 2nd_exp_Wass}
		Assume that for $\mathbb{P}$-almost any $\omega\in \Omega$ we have $\left \Vert \frac{Z}{\wt Z(\omega)}-1 \right \Vert_{\pi_Z,2} \leq 1-\varepsilon$ for a number $\varepsilon\in (0,1)$. Then
		\begin{align*}
		\mathbb{E} W(\pi_Z,\pi_{\wt Z(\cdot)})
		\leq \frac{2}{\varepsilon} \vert \pi_Z \vert^{(2)} \Big(\int_{\Theta} \mathbb{E} \Big \vert \frac{Z(\theta)}{\wt Z(\cdot,\theta)}-1 \Big \vert^2 \pi_Z(\dint \theta) \Big)^{1/2}. 
		\end{align*}
	\end{enumerate}
\end{theorem}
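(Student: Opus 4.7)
The plan is to adapt the proof of Theorem~\ref{thm: Wass_dist} pointwise for each $\omega\in\Omega$ and then pass to expectations via Fubini, Cauchy--Schwarz, and (for (ii)) Jensen's inequality.

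For (i), I will fix $\omega$ and repeat the opening of the proof of Theorem~\ref{thm: Wass_dist}: Kantorovich--Rubinstein duality (\ref{eq: Kant_Rub_duality}) together with (\ref{al: representation_diff_means}) and the pointwise bound (\ref{al: simple_and_crucial_estimate}) applied to $\wt Z(\omega,\cdot)$ yields $W(\pi_Z,\pi_{\wt Z(\omega)}) \leq I_1(\omega)+I_2(\omega)$ with the same decomposition as before. The uniform bound $|\pi_{\wt Z(\omega)}|^{(1)}\leq R$ together with (\ref{al: est_Cs}) will give $I_1(\omega)\leq R\,\|Z/\wt Z(\omega)-1\|_{\pi_Z,1}$, while $\|f\|_{\rm Lip}\leq 1$ and $f(\theta_0)=0$ imply $|f(\theta)|\leq d(\theta,\theta_0)$, so that $I_2(\omega)\leq\int d(\theta,\theta_0)\,|Z(\theta)/\wt Z(\omega,\theta)-1|\,\pi_Z(\dint\theta)$. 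Taking expectations and swapping $\mathbb{E}$ with the $\pi_Z$-integral by Fubini produces (\ref{al: 1st_est_exp_Wass}). For the ``in particular'' estimate, I will apply Cauchy--Schwarz in $\pi_Z$ to the product $(d(\theta,\theta_0)+R)\cdot \mathbb{E}|Z(\theta)/\wt Z(\cdot,\theta)-1|$, use Minkowski's inequality to bound $(\int(d(\theta,\theta_0)+R)^2\,\pi_Z(\dint\theta))^{1/2} \leq (\int d(\theta,\theta_0)^2\,\pi_Z(\dint\theta))^{1/2}+R$, and take the infimum over $\theta_0$ to produce the prefactor $|\pi_Z|^{(2)}+R$. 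The integrand under the remaining square root is handled by a Cauchy--Schwarz in $\mathbb{P}$: writing $Z/\wt Z-1 = -(\wt Z/Z-1)\cdot(Z/\wt Z)$ yields $(\mathbb{E}|Z/\wt Z-1|)^2\leq \mathbb{E}|\wt Z/Z-1|^2\cdot\mathbb{E}(Z/\wt Z)^2$, as required.

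For (ii), I will use the first non-asymptotic inequality of Theorem~\ref{thm: Wass_dist} pointwise in $\omega$, combined with the triangle-type bound $|\pi_{\wt Z(\omega)}|^{(1)}\leq|\pi_Z|^{(2)}+W(\pi_Z,\pi_{\wt Z(\omega)})$ already exploited in the derivation of (\ref{eq: with_moment_of_pi_wt_Z}). This yields $W\leq\|Z/\wt Z(\omega)-1\|_{\pi_Z,2}(2|\pi_Z|^{(2)}+W)$ almost surely, and the hypothesis $\|Z/\wt Z(\omega)-1\|_{\pi_Z,2}\leq 1-\varepsilon$ rearranges to $W(\pi_Z,\pi_{\wt Z(\omega)})\leq (2/\varepsilon)\,|\pi_Z|^{(2)}\,\|Z/\wt Z(\omega)-1\|_{\pi_Z,2}$ almost surely. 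Taking expectations, then applying Jensen's inequality (by concavity of $\sqrt{\cdot}$) together with Fubini inside the $L^2(\pi_Z)$-norm, will give the claim.

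The hard part is not any single inequality, but rather keeping track of the order in which $\inf_{\theta_0}$, $\mathbb{E}$ and the $\pi_Z$-integration are applied in (i): since (\ref{al: 1st_est_exp_Wass}) is to hold for \emph{every} admissible $\theta_0$ while its left-hand side is $\theta_0$-independent, the infimum can be moved to the outside only at the very end. Measurability of $\omega\mapsto W(\pi_Z,\pi_{\wt Z(\omega,\cdot)})$ and of the relevant random $L^p(\pi_Z)$-norms was already noted at the beginning of Section~\ref{sec: rand}, so Fubini is legitimate throughout.
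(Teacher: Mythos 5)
Your proposal is correct and follows essentially the same route as the paper: the pointwise decomposition $W\leq I_1+I_2$ with the bound \eqref{al: est_Cs} and $|f(\theta)|\leq d(\theta,\theta_0)$ for part (i), Cauchy--Schwarz in $\pi_Z$ plus Minkowski and the identity $\frac{Z}{\wt Z}-1=-\bigl(\frac{\wt Z}{Z}-1\bigr)\frac{Z}{\wt Z}$ for the ``in particular'' estimate, and the $\omega$-wise application of \eqref{eq: with_moment_of_pi_wt_Z} followed by Jensen and Fubini for part (ii). The only difference is cosmetic: you spell out the Cauchy--Schwarz steps that the paper compresses into ``by using the Cauchy--Schwarz inequality several times.''
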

\begin{proof}
	First we verify the statement of \eqref{it: 1st_exp_Wass}.
	From \eqref{eq: Wass_I1_I2_ineq} we obtain for arbitrary $\theta_0\in\Theta$ that
	
	\begin{align*}
	\mathbb{E} W(\pi_Z,\pi_{\wt Z(\cdot)}) 
	&	\leq R\, \mathbb{E} \bigg(\frac{\vert C_Z - C_{\wt Z(\cdot)}\vert}{C_Z}  \bigg)
	+ \mathbb{E} \bigg( \int_\Theta d(\theta,\theta_0) \Big \vert \frac{Z(\theta)}{\wt Z(\cdot,
		\theta)}-1 \Big \vert \pi_Z(\dint \theta)\bigg)\\
	& \underset{\eqref{al: est_Cs}}{\leq} R \; \mathbb{E} \Big \Vert \frac{Z}{\wt Z(\cdot)} -1 \Big\Vert_{\pi_Z,1} 
	+ \int_\Theta d(\theta,\theta_0) \, \mathbb{E} \Big \vert \frac{Z(\theta)}{\wt Z(\cdot,
		\theta)}-1 \Big \vert \pi_Z(\dint \theta)\\
	& = \int_\Theta (d(\theta,\theta_0)+R) \;\mathbb{E} \Big \vert \frac{Z(\theta)}{\wt Z(\cdot,
		\theta)}-1 \Big \vert \pi_Z(\dint \theta),
	\end{align*}
	which gives \eqref{al: 1st_est_exp_Wass}. Further estimating (by using the Cauchy--Schwarz inequality several times) leads to
	
	\begin{align*}
	\mathbb{E} W(\pi_Z,\pi_{\wt Z(\cdot)}) 
	& \leq  (\vert \pi_Z \vert^{(2)}+R) 
	\left( \int_\Theta \Big[ \mathbb{E}\Big\vert \frac{Z(\theta)}{\wt Z(\cdot,\theta)} -1\Big\vert \Big]^2 \pi_Z(\dint \theta)\right)^{1/2}\\
	& \leq (\vert \pi_Z \vert^{(2)}+R) 
	\left( \int_\Theta \mathbb{E} \Big \vert \frac{\wt Z(\cdot,\theta)}{Z(\theta)} -1 \Big \vert^2 \mathbb{E}\Big [ \frac{Z(\theta)}{\wt Z(\cdot,\theta)} \Big]^2 \pi_Z(\dint \theta)\right)^{1/2}.
	\end{align*}
	Now we turn to the proof of the statement of \eqref{it: 2nd_exp_Wass}. By \eqref{eq: with_moment_of_pi_wt_Z} it is sufficient to estimate $\mathbb{E} \Vert \frac{Z}{\wt Z(\cdot)}-1 \Vert_{\pi_Z,2}$. With Jensen's inequality as well as a change of integration we have
	\[
	\mathbb{E} \Big\Vert \frac{Z}{\wt Z(\cdot)}-1 \Big \Vert_{\pi_Z,2} 
	\leq \left(\int_{\Theta} \mathbb{E} \Big \vert \frac{Z(\theta)}{\wt Z(\cdot,\theta)} -1 \Big \vert^2 \pi_Z(\dint \theta) \right)^{1/2}
	\]
	which concludes the proof.
\end{proof}
\begin{remark}
	\label{rem: restr_R}
	The condition in the first statement of Theorem~\ref{thm: expected_Wass} that for some $R<\infty$ we assume $\vert \pi_{\wt Z(\omega)} \vert^{(1)} \leq R$ for almost any $\omega\in \Omega$ seems to be restrictive. 
	We would like to comment on that:
	
	\begin{enumerate}
		\item
		If $(\Theta,d)$ is a bounded metric space, then $\vert \pi_{\wt Z(\omega)} \vert^{(1)} \leq R$ holds 
		%trivially 
		for $R$ being the upper bound for $d$.
		Further note, that by $\wt d(\theta_1,\theta_2) := \min\{R, d(\theta_1,\theta_2)\}$ we can turn any metric space $(\Theta,d)$ into a topologically equivalent bounded metric space $(\Theta,\wt d)$.
		\item
		If $\wt Z$ is a Monte Carlo approximation of $Z$, then we might be able to derive functions $\wt \ell,\wt u \colon \Theta \to \mathbb{R}$ satisfying $\wt \ell(\theta) \leq \wt Z(\omega,\theta) \leq \wt u(\theta)$ for all $(\omega,\theta)\in \Omega\times \Theta$, cf.~Section \ref{sec:MC}. Setting
		%	\begin{align*}
		\[
		R := \frac{\inf_{\theta_0\in\Theta} \int_\Theta d(\theta_0,\theta)\frac{\exp(-\Phi(\theta))}{\wt \ell(\theta)} \mu(\dint \theta)}{\int_\Theta \frac{\exp(-\Phi(\theta))}{
				\wt u(\theta)} \mu(\dint \theta)}
		\]
		%	\end{align*} 
		and assuming its finiteness then leads to $\vert \pi_{\wt Z(\omega)} \vert^{(1)} \leq R$.
		\item
		In addition to that, the second statement of Theorem~\ref{thm: expected_Wass} tells us, within the setting of a Monte Carlo approximation $\wt Z$, that ``asymptotically'', if at some accuracy level $\wt Z$ is sufficiently close to $Z$ with probability one, then no explicit upper bound on $\vert \pi_{\wt Z(\omega)} \vert^{(1)}$ is required.
	\end{enumerate}
\end{remark}
\begin{remark}
	Related to results in this subsection are the recent publications \cite{lie2018random,lie2019error}.
	There, the authors studied well-posedness properties of Bayesian inverse problems with random likelihoods. 
	In particular, they bounded the expected squared Hellinger distance between a random approximate posterior distribution and a desired one. 
\end{remark}

\section{Monte Carlo recovery -- illustrative examples}\label{sec:MC}

Given the probability measure $\pi_Z$ on $(\Theta,\mathcal{B}(\Theta))$ one might ask how to get $\wt Z$ which is ``close'' to $Z$
in the sense of our stability results. Thus, we are looking for an approximation $\wt Z$ of $Z$. 
Observe that, for an arbitrary, maybe even unknown, constant $c>0$ it is actually sufficient to approximate the function $c\cdot Z$, since $\pi_{cZ}=\pi_Z$. 
However, the crucial difficulty lies in the fact that we do not have access to function evaluations of $c\cdot Z$. 
Nevertheless, there exist scenarios where Monte Carlo estimators can successfully be applied, see \cite{alquier2016noisy,atchade2013bayesian,habeck2014bayesian,rudolf2018perturbation}. This motivates the consideration of Monte Carlo recovery methods for $Z$. 
A random function $\wt Z_N$, as defined in Section~\ref{sec: rand}, is a Monte Carlo approximation of $Z$ with information parameter $N$ if $\wt Z_N$ uses at most $N$ pieces of available information, e.g.~function evaluations or samples w.r.t.~a certain distribution. We consider two illustrating scenarios for which we provide explicit Monte Carlo approximations of $Z$.

\subsection{Simple Monte Carlo recovery}
We consider the same framework as in \cite[Section~4.1]{medina2019perturbation}.
Let $(G,\mathcal{G})$ be a measurable space and $\rho\colon G\times \Theta \to[0,\infty)$ be a measurable function, such that
\[
Z(\theta) := \int_G \rho(x,\theta) \, \nu_\theta(\dint x) \in (0,\infty),
\] 
where $(\nu_\theta)_{\theta\in\Theta}$ is a family of probability distribution on $G$. With another measurable function $\Phi\colon \Theta\to (-\infty,\infty)$ this defines the distribution of interest $\pi_Z$ as given in \eqref{eq: pi_Z}.  

We assume that for any $\theta\in \Theta$ we can sample w.r.t.~$\nu_\theta$. Then
$
\wt Z_N(\theta) := \frac{1}{N} \sum_{j=1}^N \rho(X_j^{(\theta)},\theta),
$
with $X_1^{(\theta)},\dots,X_N^{(\theta)}$ being an iid sample w.r.t.~$\nu_\theta$, provides a Monte Carlo approximation of $Z$. 
%	For $\theta_1,\theta_2\in\Theta$ the corresponding samples are also independent of each orther. 
Define $Q_N(\theta) := Z_N(\theta)/Z(\theta)$ and note that
$
\left(\mathbb{E}\vert Q_N(\theta)-1\vert^2\right)^{1/2} = \frac{\left(\mathbb{E}\vert Q_1(\theta)-1 \vert^2\right)^{1/2}}{\sqrt{N}},
$
%	where $s(\theta):= \left(\mathbb{E}\vert Q_1(\theta)-1 \vert^2\right)^{1/2}$ 
as well as
by \cite[Lemma~23]{medina2019perturbation} we have for the second inverse moment of $Q_N(\theta)$ that $\left(\mathbb{E}Q_N(\theta)^{-2}\right)^{1/2} \leq \left(\mathbb{E}Q_1(\theta)^{-2}\right)^{1/2}$. Hence by Corollary~\ref{cor: expect_tv} we obtain
\begin{equation} 
\label{eq: Ex1_tv}
\mathbb{E} \Vert \pi_Z - \pi_{\wt Z_N(\cdot)} \Vert_{\rm tv} 
\leq \frac{2}{\sqrt{N}}\int_\Theta \left(\mathbb{E}\vert Q_1(\theta)-1 \vert^2\cdot  \mathbb{E}Q_1(\theta)^{-2}\right)^{1/2} \pi_Z(\dint \theta).
\end{equation}
To further estimate the former inequality we impose the following regularity assumption throughout the rest of this section.
\begin{assumption}
	For $\rho\colon G\times \Theta \to[0,\infty)$ assume that there are measurable functions $\ell,u \colon \Theta \to (0,\infty)$ satisfying
	$
	\ell(\theta) \leq \rho(x,\theta) \leq u(\theta), \quad \text{for all}\quad (x,\theta)\in G\times \Theta.
	$
\end{assumption}
\noindent
Then, we have
$
\mathbb{E}\vert Q_1(\theta)-1\vert^2 
\leq \mathbb{E} Q_1(\theta)^2 
%= \frac{\int_G \rho(x,\theta)^2 \nu_{\theta}(\dint x)}{\Big( \int_G \rho(x,\theta)\nu_{\theta}(\dint x)\Big)^2} 
\leq u(\theta)^2/Z(\theta)^2
$
as well as $\mathbb{E}Q_1(\theta)^{-2} \leq  Z(\theta)^2/\ell(\theta)^2$. Thus, with \eqref{eq: Ex1_tv} we obtain
\begin{equation}
\label{eq: Ex1_tv_ul}
\mathbb{E} \Vert \pi_Z - \pi_{\wt Z_N(\cdot)} \Vert_{\rm tv} 
\leq \frac{2}{\sqrt{N}}\; \left\Vert \frac{u}{\ell} \right\Vert_{\pi_Z,1}.
\end{equation}
Furthermore, setting 

\[
R := \frac{\inf_{\theta_0\in\Theta} \int_\Theta d(\theta_0,\theta)\frac{\exp(-\Phi(\theta))}{\ell(\theta)} \mu(\dint \theta)}{\int_\Theta \frac{\exp(-\Phi(\theta))}{u(\theta)} \mu(\dint \theta)}
\]
and assuming that it is finite easily gives $\vert \pi_{\wt Z_N(\omega,\cdot)} \vert^{(1)} \leq R$ for any $\omega\in \Omega$, see also Remark~\ref{rem: restr_R}. Then, by Theorem~\ref{thm: expected_Wass}
%	\eqref{it: 2nd_exp_Wass} 
we have

\begin{align}
\mathbb{E} W(\pi_Z,\pi_{\wt Z_N(\cdot)})
%\leq & \frac{1}{\sqrt{N}} \int_\Theta \left(\mathbb{E}\vert Q_1(\theta)-1 \vert^2\cdot  \mathbb{E}Q_1(\theta)^{-2}\right)^{1/2} 
%(d(\theta,\theta_0) + R)\,
%\pi_Z({\rm d} \theta)\\
\leq & \frac{(R+\vert \pi_Z \vert^{(2)})}{\sqrt{N}} \left\Vert \frac{u}{\ell} \right\Vert_{\pi_Z,2}.
\label{eq: Ex1_Wass}
\end{align}
%for any $\theta_0\in\Theta$. 
Summarized, one can say that if the integrals on the right-hand sides of \eqref{eq: Ex1_tv}, \eqref{eq: Ex1_tv_ul} and \eqref{eq: Ex1_Wass} are finite, then the difference of $\pi_Z$ and $\pi_{\wt Z_N(\cdot)}$ measured either in the total variation or Wasserstein distances decreases with the classical Monte Carlo rate of convergence.

\subsection{Gibbs distribution}
\noindent
Let $G$ be a finite set and suppose that there is a measurable function $h\colon G\times \Theta \to (-\infty,\infty)$, such that

\[
\rho(x\mid \theta) := \frac{\exp(-h(x,\theta))}{Z(\theta)},\quad x\in G,	
\]
is a probability mass function on $G$, where $Z(\theta) := \sum_{x\in G} \exp(-h(x,\theta))$ denotes the normalizing constant of $\exp(-h(x,\theta))$ given $\theta\in \Theta$. In that setting, for a given $x_{\rm obs}\in G$ we have
$
\Phi(\theta)=h(x_{\rm obs},\theta).
$
Note that this framework contains Example~\ref{ex: Boltzmann} as a special case. 
We impose the following condition.
\begin{assumption}
	We assume that we can sample on $G$ w.r.t.~the distribution determined by $\rho(\cdot\mid \theta)$ for any $\theta\in\Theta$. 
\end{assumption}	

Usually there are two arguments why this assumption is not too restrictive. The first is that one might rely on perfect sampling and the second, that one can, at least approximately, sample from such distributions by using Markov chains. 
\begin{remark}
	In the scenario of Example~\ref{ex: Boltzmann} for a given $\theta\in \Theta$ (or rather $\beta\in(0,\infty)$) in a series of papers a Monte Carlo product estimator for $Z(\theta)$ (or better $Z(\beta)$) has been analyzed, see \cite{kolmogorovGibbs2018} and the references therein. As an alternative also multiple importance sampling approaches have been applied for the approximation of $Z(\theta)$, see \cite{atchade2013bayesian,habeck2014bayesian,liang2016adaptive}.	
\end{remark}
Motivated by that we consider a simple multiple importance sampling method. Assume that $\theta_1,\dots,\theta_J\in \Theta$ and note that for any $j\in\{1,\dots,J\}$ holds
\[
\frac{Z(\theta)}{Z(\theta_{j})}
= \sum_{x\in G}
\frac{\exp(-(h(x,\theta))}{\exp(-h(x,\theta_{j}))}\,
%		\exp(-(h(x,\theta)-h(x,\theta^{(j)}))) 
\rho(x\mid \theta_{j}).
\]
For $N\in\mathbb{N}$ let $X_{1,j},\dots,X_{N,j}$ be an iid sample w.r.t.~$\rho(\cdot\mid \theta_{j})$ and for any $i\in\{1,\dots,N\}$ let $X_{i,1},\dots,X_{i,J}$ be independent. Given  $p_1,\dots,p_J\in(0,1)$ with $\sum_{j=1}^Jp_j=1$ let 

\[
S^{(i)}(\theta) := \sum_{j=1}^J p_j\,\frac{\exp(-h(X_{i,j},\theta))}{\exp(-h(X_{i,j},\theta_{j}))}
\]
and with $S(\theta):=Z(\theta)\sum_{j=1}^J p_j/Z(\theta_{j})$ note that $\mathbb{E} S^{(i)}(\theta) = S(\theta)$. Intuitively $p_j$ weights the importance of the sample derived by $\rho(\cdot \mid \theta_j)$. Then, an unbiased estimator of $S(\theta)$ is given by the multiple importance sampler 

\[
\wt S_{N}(\theta) := \frac{1}{N} \sum_{i=1}^N S^{(i)}(\theta).
\]
Note that $S^{(1)}(\theta),\dots,S^{(N)}(\theta)$ is a sequence of iid random variables. 	With $Q_N(\theta):= \wt S_{N}(\theta)/S(\theta)$ a simple calculation reveals that
%	\begin{align*}
$
\left(\mathbb{E}\vert Q_N(\theta)-1\vert^2\right)^{1/2}
%	 = &  \frac{1}{S(\theta)} \sum_{j=1}^J \mathbf{1}_{\Theta^{(j)}}(\theta) \left(\mathbb{E}\Big\vert S_N^{(j)}(\theta)-\frac{Z(\theta)}{Z(\theta^{(j)})}\Big\vert^2\right)^{1/2}\\
%	 = & \frac{1}{\sqrt{N} S(\theta)} \sum_{j=1}^J \mathbf{1}_{\Theta^{(j)}}(\theta) \left(\mathbb{E}\Big\vert S_1^{(j)}(\theta)-\frac{Z(\theta)}{Z(\theta^{(j)})}\Big\vert^2\right)^{1/2}\\
= 
%	 & 
\frac{\left(\mathbb{E}\vert Q_1(\theta)-1 \vert^2\right)^{1/2}}{\sqrt{N}}.
$
%	\end{align*}
For the inverse second moment we have by \cite[Lemma~2.3(i)]{medina2019perturbation} that $\mathbb{E}[Q_N(\theta)^{-2}]^{1/2} \leq  	\mathbb{E}[Q_1(\theta)^{-2}]^{1/2}.$
%	\begin{align*}
%		\mathbb{E}[Q_N(\theta)^{-2}]^{1/2} 
%	& =  \Big(\mathbb{E}\Big[ \frac{S(\theta)}{\sum_{j=1}^J \mathbf{1}_{\Theta^{(j)}}(\theta) S_N^{(j)}(\theta)}\Big]^2 \Big)^{1/2} 
%	=  \sum_{j=1}^J \mathbf{1}_{\Theta^{(j)}}(\theta) \Big(\mathbb{E}\Big[ \frac{S(\theta)}{ S_N^{(j)}(\theta)}\Big]^2 \Big)^{1/2}\\
%	& \leq \sum_{j=1}^J \mathbf{1}_{\Theta^{(j)}}(\theta) \Big(\mathbb{E}\Big[ \frac{S(\theta)}{ S_1^{(j)}(\theta)}\Big]^2 \Big)^{1/2}
%	= 		\mathbb{E}[Q_1(\theta)^{-2}]^{1/2}.
%	\end{align*}
Hence, by the fact that $\pi_Z = \pi_S$ and by Corollary~\ref{cor: first_cons} we obtain 
\begin{align}
\mathbb{E} \Vert \pi_Z - \pi_{\wt S_N(\cdot)} \Vert_{\rm tv} 
\notag
& = \mathbb{E} \Vert \pi_S - \pi_{\wt S_N(\cdot)} \Vert_{\rm tv}\\
& \leq \frac{2}{\sqrt{N}}\int_\Theta \left(\mathbb{E}\vert Q_1(\theta)-1 \vert^2\cdot  \mathbb{E}Q_1(\theta)^{-2}\right)^{1/2} \pi_Z(\dint \theta).
\label{eq: Ex2_tv}
\end{align}
To elaborate on this we impose the following assumption.
\begin{assumption}
	\label{ass: reg_Gibbs}
	For $h\colon G \times \Theta \to (-\infty,\infty)$ assume that there are measurable functions $\ell,u\colon \Theta \to [0,\infty)$ satisfying \quad
	$
	\ell(\theta) \leq \exp(-h(x,\theta)) \leq u(\theta)
	\quad \text{for all} \quad
	(x,\theta)\in G\times \Theta.
	$
\end{assumption}
\noindent
Under this condition we have with the identity of Bieanym{\'e} 
that
\begin{align*}
\mathbb{E} \vert Q_1(\theta)-1\vert^2 
& = \frac{1}{S(\theta)^2} \mathbb{E} \Big[ \sum_{j=1}^J p_j \Big( \frac{\exp(-h(X_{1,j},\theta))}{\exp(-h(X_{1,j},\theta_j))}-\frac{Z(\theta)}{Z(\theta_j)} \Big) \Big]^2 \\
& = \frac{1}{S(\theta)^2} \sum_{j=1}^J p_j^2\; \mathbb{E} \Big[  \frac{\exp(-h(X_{1,j},\theta))}{\exp(-h(X_{1,j},\theta_j))}-\frac{Z(\theta)}{Z(\theta_j)} \Big]^2\\
& \leq \frac{1}{S(\theta)^2} \sum_{j=1}^J p_j^2\; \mathbb{E} \Big( \frac{\exp(-2h(X_{1,j},\theta))}{\exp(-2h(X_{1,j},\theta_j))} \Big)
\leq \frac{u(\theta)^2}{S(\theta)^2} \sum_{j=1}^J \frac{p_j^2}{\ell(\theta_j)^2}
\end{align*}
and
\begin{align*}
\mathbb{E}Q_1(\theta)^{-2}
= S(\theta)^2\; \mathbb{E}\Big( \sum_{k=1}^J p_k \frac{\exp(-h(X_{1,k},\theta))}{\exp(-h(X_{1,k},\theta_k))}  \Big)^{-2}
\leq \frac{S(\theta)^2}{\ell(\theta)^2} \Big( \sum_{k=1}^J \frac{p_k}{u(\theta_k)} \Big)^{-2}.
\end{align*}
Therefore, \eqref{eq: Ex2_tv} implies

\[
\mathbb{E} \Vert \pi_Z - \pi_{\wt S_N(\cdot)} \Vert_{\rm tv} 
\leq \frac{2}{\sqrt{N}} \frac{\Big(\sum_{j=1}^J \frac{p_j^2}{\ell(\theta_j)^2}\Big)^{1/2}}{\sum_{k=1}^J\frac{p_k}{u(\theta_k)}} \left \Vert \frac{u}{\ell} \right \Vert_{\pi_Z,1}.
\]
Under Assumption~\ref{ass: reg_Gibbs} we can also derive an upper bound for the expected Wasserstein distance of $\pi_Z$ and $\pi_{\wt S_N(\cdot)}$. For this observe that
$
\ell(\theta) \sum_{j=1}^J \frac{p_j}{u(\theta_j)}
\leq \wt S_N(\theta) \leq u(\theta) \sum_{k=1}^J \frac{p_k}{\ell(\theta_k)},
$
such that, if 

\[
R := 
\frac{\sum_{k=1}^J \frac{p_k}{\ell(\theta_k)}}{\sum_{j=1}^J \frac{p_j}{u(\theta_j)}} \cdot 
\frac{\inf_{\theta_0\in \Theta} \int_\Theta d(\theta,\theta_0) \frac{u(\theta)}{\ell(\theta)} \mu(\dint \theta)}{\int_\Theta  \frac{u(\theta)}{\ell(\theta)} \mu(\dint \theta)} 
\]
is finite, then $\vert \pi_{\wt S_N(\omega)} \vert^{(1)} \leq R<\infty$, compare Remark~\ref{rem: restr_R}. Now, we obtain with Theorem~\ref{thm: expected_Wass}, in particular \eqref{al: 1st_est_exp_Wass},
that

\[
\mathbb{E}W(\pi_Z,\pi_{\wt S_N(\cdot)})
\leq \frac{(\vert \pi_Z \vert^{(2)}+R)}{\sqrt{N}} \frac{\Big(\sum_{j=1}^J \frac{p_j^2}{\ell(\theta_j)^2}\Big)^{1/2}}{\sum_{k=1}^J\frac{p_k}{u(\theta_k)}}
\left \Vert \frac{u}{\ell} \right\Vert_{\pi_Z,2}.
\]
Summarized, we observe again that the expected total variation as well as expected Wasserstein distance decays with (at least) the usual Monte Carlo rate of convergence.

\section{Conclusion}
We conducted a stability analysis of doubly-intractable distributions. In particular, given two functions $Z,\wt Z\colon\Theta \to (0,\infty) $ we derived estimates of the total variation and Wasserstein distance of $\pi_Z$ and $\wt \pi_{\wt Z}$. Essentially it turns out that if a relative difference between $Z$ and $\wt Z$, measured in a certain $L^p$-sense, is small, then also $\pi_Z$ and $\pi_{\wt Z}$ are close to each other. We also consider a randomization of $\wt Z$, that is, for any $\theta\in \Theta$ we have a random variable $\wt Z(\theta)$. In this context we provide estimates on the expected total variation and Wasserstein distance of $\pi_Z$ to the random measure $\pi_{\wt Z}$. In addition to that we illustrate our bounds in two simple Monte Carlo recovery settings. 

Finally let us comment on further aspects. In the stability analysis we focused on the total variation and Wasserstein distance, but of course also other quantities for measuring the difference of distributions, such as the Hellinger distance or Kullback-Leibler divergence, are reasonable to investigate. Furthermore, in Section~\ref{sec: rand} we only considered the expected difference of distributions. In the light of \cite{kolmogorovGibbs2018} and also \cite{kunsch2019solvable,kunsch2019optimal} statements of the type ``small error with high probability'' are desirable. In particular, an investigation concerning the approximation of functions by Monte Carlo recovery algorithms seems to be a challenging and very interesting task.

%\bibliographystyle{amsplain}
%
%\bibliography{lit}

\begin{thebibliography}{10}
	
	\bibitem{alquier2016noisy}
	P.~Alquier, N.~Friel, R.~Everitt, and A.~Boland, \emph{{Noisy Monte Carlo:
			Convergence of Markov chains with approximate transition kernels}},
	Statistics and Computing \textbf{26} (2016), no.~1-2, 29--47.
	
	\bibitem{atchade2013bayesian}
	Yves~F Atchad{\'e}, Nicolas Lartillot, Christian Robert, et~al., \emph{Bayesian
		computation for statistical models with intractable normalizing constants},
	Brazilian Journal of Probability and Statistics \textbf{27} (2013), no.~4,
	416--436.
	
	\bibitem{dashtistuart17}
	M.~Dashti and A.~Stuart, \emph{{The Bayesian approach to inverse problems}},
	Handbook of Uncertainty Quantification (R.~Ghanem, D.~Higdon, and H.~Owhadi,
	eds.), Springer, 2017, pp.~311--428.
	
	\bibitem{habeck2014bayesian}
	M.~Habeck, \emph{Bayesian approach to inverse statistical mechanics},
	Physical Review E \textbf{89} (2014), no.~5, 052113.
	
	\bibitem{hunter2006inference}
	D.~Hunter and M.~Handcock, \emph{Inference in curved exponential family models
		for networks}, Journal of Computational and Graphical Statistics \textbf{15}
	(2006), no.~3, 565--583.
	
	\bibitem{kolmogorovGibbs2018}
	V.~Kolmogorov, \emph{A faster approximation algorithm for the {G}ibbs partition
		function}, Proceedings of the 31st Conference On Learning Theory (S\'ebastien
	Bubeck, Vianney Perchet, and Philippe Rigollet, eds.), Proceedings of Machine
	Learning Research, vol.~75, PMLR, 2018, pp.~228--249.
	
	\bibitem{kunsch2019solvable}
	R.~J. Kunsch, E.~Novak, and D.~Rudolf, \emph{Solvable integration problems and
		optimal sample size selection}, Journal of Complexity \textbf{53} (2019),
	40--67.
	
	\bibitem{kunsch2019optimal}
	R.~J. Kunsch and D.~Rudolf, \emph{{Optimal confidence for Monte Carlo
			integration of smooth functions}}, Advances in Computational Mathematics
	(2019), 1--28.
	
	\bibitem{latz2019well}
	J.~Latz, \emph{{On the well-posedness of Bayesian inverse problems}}, arXiv
	preprint arXiv:1902.10257 (2019).
	
	\bibitem{liang2016adaptive}
	F.~Liang, I.~Jin, Q.~Song, and J.~Liu, \emph{An adaptive exchange algorithm for
		sampling from distributions with intractable normalizing constants}, Journal
	of the American Statistical Association \textbf{111} (2016), no.~513,
	377--393.
	
	\bibitem{lie2018random}
	H.~Ch. Lie, T.~Sullivan, and A.~Teckentrup, \emph{{Random forward models and
			log-likelihoods in Bayesian inverse problems}}, SIAM/ASA Journal on
	Uncertainty Quantification \textbf{6} (2018), no.~4, 1600--1629.
	
	\bibitem{lie2019error}
	\bysame, \emph{{Error bounds for some approximate posterior measures in
			Bayesian inference}}, arXiv preprint arXiv:1911.05669 (2019).
	
	\bibitem{murray22amp}
	D.~MacKay, I~Murray, and Z~Ghahramani, \emph{{MCMC for doubly-intractable
			distributions}}, Uncertainty in Artificial Intelligence, vol.~22, 2006.
	
	\bibitem{medina2019perturbation}
	F.~Medina-Aguayo, D.~Rudolf and N.~Schweizer, \emph{{Perturbation bounds for
			Monte Carlo within Metropolis via restricted approximations}}, Stochastic
	Processes and their Applications (2019).
	
	\bibitem{moller2006efficient}
	J.~M{\o}ller, A.~Pettitt, R.~Reeves, and K.~Berthelsen, \emph{{An efficient
			Markov chain Monte Carlo method for distributions with intractable
			normalising constants}}, Biometrika \textbf{93} (2006), no.~2, 451--458.
	
	\bibitem{panaretos2019statistical}
	V.~Panaretos and Y.~Zemel, \emph{{Statistical aspects of Wasserstein
			distances}}, Annual review of statistics and its application \textbf{6}
	(2019), 405--431.
	
	\bibitem{park2018bayesian}
	J.~Park and M.~Haran, \emph{Bayesian inference in the presence of intractable
		normalizing functions}, Journal of the American Statistical Association
	\textbf{113} (2018), no.~523, 1372--1390.
	
	\bibitem{rudolf2018perturbation}
	D.~Rudolf and N.~Schweizer, \emph{{Perturbation theory for Markov chains via
			Wasserstein distance}}, Bernoulli (2018), no.~4A, 2610--2639.
	
	\bibitem{sprungk2020local}
	B.~Sprungk, \emph{{On the local Lipschitz stability of Bayesian inverse
			problems}}, Inverse Problems \textbf{36} (2020).
	
	\bibitem{St10}
	A.~Stuart, \emph{{Inverse problems: A Bayesian perspective}}, Acta Numerica
	\textbf{19} (2010), 451--559.
	
	\bibitem{Vi09}
	C.~Villani, \emph{Optimal transport: Old and new}, Springer Verlag, Berlin,
	2009, Grundlehren der Mathematischen Wissenschaften, no. 338.
	
\end{thebibliography}

\providecommand{\bysame}{\leavevmode\hbox to3em{\hrulefill}\thinspace}
\providecommand{\MR}{\relax\ifhmode\unskip\space\fi MR }
% \MRhref is called by the amsart/book/proc definition of \MR.
\providecommand{\MRhref}[2]{%
	\href{http://www.ams.org/mathscinet-getitem?mr=#1}{#2}
}
\providecommand{\href}[2]{#2}

\end{document}